\definecolor{red}{rgb}{1,0.1,0.1}
\definecolor{blue}{rgb}{0.1,0.1,1}
\definecolor{vb}{RGB}{160,32,240}
\theoremstyle{plain}
\newtheorem*{thm*}{Theorem}
\newtheorem*{prop*}{Proposition}
\numberwithin{equation}{section}
\newtheorem{thm}{Theorem}[section]
\newtheorem{lemma}[thm]{Lemma}
\newtheorem{corol}[thm]{Corollary}
\newtheorem{prop}[thm]{Proposition}
\theoremstyle{remark}
\newtheorem{remark}[thm]{Remark}
\theoremstyle{definition}
\newtheorem*{mydef*}{Definition}
\newtheorem{ejem}[thm]{Example}
\newcommand{\Z}{\mathbb{Z}}
\newcommand{\N}{\mathbb{N}}
\begin{document}

\title[Fractional Maximal operator in hyperbolic spaces]{Fractional Maximal operator in hyperbolic spaces}

\author[G.~Iba\~{n}ez~Firnkorn]{Gonzalo Iba\~{n}ez-Firnkorn}
\address{G.~Iba\~{n}ez~Firnkorn\\
Instituto de Matemática (INMABB), Departamento de Matemática, Universidad Nacional del Sur (UNS)-CONICET, Bahía Blanca, Argentina}
\email{gonzalo.ibanez@uns.edu.ar}

\author[E.~Ramadori]{Emanuel Ramadori}
\address{E.~Ramadori\\
Instituto de Matemática (INMABB), Departamento de Matemática, Universidad Nacional del Sur (UNS)-CONICET, Bahía Blanca, Argentina}
\email{emanuel.ramadori@uns.edu.ar}

\thanks{Both authors were  partially supported by
CONICET and  SGCyT-UNS}

\subjclass[2020]{43A85}

\keywords{Fractional maximal operator, Hyperbolic spaces, Weights}

\begin{abstract}
In this article, we introduce the fractional maximal operator on the Hyperbolic space, a non-doubling measure space, and study the weighted boundedness. Motivated in the weighted boundedness of Hardy-Littlewood maximal studied by Antezana and Ombrosi in \cite{AO23}, we give conditions for the weak type and strong type estimate for fractional maximal. Also, we provide examples of weights for which the fractional maximal operator satisface weak type $(p,q)$  inequality but strong type $(p,q)$ inequality fails.
\end{abstract}

\maketitle

\section{Introduction}
In harmonic analysis two important operators are the Hardy-Littlewood maximal operator and the fractional maximal operator. In  the context of a metric measure space $(X,\mu,d)$, it can be defined the centered Hardy-Littlewood maximal operator on balls as
$$M_{X}f(x)=\sup_{r>0}\frac1{\mu(B_{X}(x,r))}\int_{B_{X}(x,r)} |f(y)|d\mu(y).$$
This  maximal operator was widely studied in the literature. It is well known that if the measure $\mu$ is doubling, that is $\mu(B_X(x,2r))\leq C_X \mu(B_X(x,r))$, then $M_X$ is bounded on $L^p(X,w\,d\mu)$ with a certain class of weights $w$. In the case of $L^p(\mathbb{R}^n,w\,dx)$, where $dx$ is the Lebesgue measure, the class of weight is the classical Muckenhoupt weight $A_p$.

In the classical study of the boundedness of maximal operators the condition of doubling measure plays a {\it crucial} role. In recent works, the authors consider non-doubling measure spaces, in particular measure with a exponential growth. These spaces are for example the infinite rooted  $k$-ary tree studied in \cite{NT10,ORR22,ORRS21,GR23} and the $n$-dimensional hyperbolic space studied in \cite{AO23}. Also, in \cite{GRS23} the authors consider harmonic $NA$ groups.

In this paper we focus in the context of the  $n$-dimensional hyperbolic space, defined as follow:

Let $\mathcal{H}^n$ be the $n$-dimensional hyperbolic space, i.e. the unique (up to isometries) $n$-dimensional, complete and simply connected Riemannian manifold with constant sectional curvature $-1$. Let $\mu_n$ be the corresponding volume measure. If $B_{\mathcal{H}}(x,r)$ denotes the hyperbolic ball of  radius $r$ centered at $x$, the centered Hardy-Littlewood maximal operator on $\mathcal{H}^n$ is defined as 
$$Mf(x)=\sup_{r>0}\frac1{\mu_n(B_{\mathcal{H}}(x,r))}\int_{B_{\mathcal{H}}(x,r)} |f(y)|d\mu_n(y)$$
and the centered fractional maximal operator is defined, for $0<\alpha<n$, as
$$M_{\alpha}f(x)=\sup_{r>0}\frac{\mu_n(B_{\mathcal{H}}(x,r))^{\frac{\alpha}{n}}}{\mu_n(B_{\mathcal{H}}(x,r))}\int_{B_{\mathcal{H}}(x,r)} |f(y)|d\mu_n(y).$$

In the seminar work \cite{SW78}, Stein and Wainger proposed the study of the end-point estimates for the centered Hardy-Littlewood maximal function when the curvature of the underline space could be non-negative. In this more general scenario, the euclidean spaces $\mathbb{R}^n$ and the aforementioned hyperbolic spaces $\mathcal{H}^n$ represent two extreme cases. 

In \cite{S81}, Strömberg proved the (unweighted) weak type $(1,1)$ boundedness of $M$ in symmetric spaces of non-compact type, suggesting that the behavior of maximal operator is the same in both spaces, $\mathbb{R}^n$ and $\mathcal{H}^n$. However, this is not the case in general, and it will be reveled by analyzing weighted estimates.

In the recent paper of \cite{AO23}, Antezana and Ombrosi studied the Fefferman-Stein inequality for the Hardy-Littlewood maximal operator in the hyperbolic setting. Also, they give weight conditions for the weak and strong type estimates.

On the other hand, in \cite{GR23} Ghosh and Rela studied the weighted inequalities for fractional maximal operator on the infinite rooted $k$-ary tree. In \cite{LS22}, Levi and Santagati studied the boundedness between un-weighted Lorentz spaces of homogeneous tree.

Now,  for the un-weighted boundedness of the fractional maximal operator on the hiperbolic space, observe that $M_{\alpha}f(x)\lesssim \|f\|_{L^1(\mathcal{H}^n)}^{\frac{\alpha}{n}}(Mf(x))^{1-\frac{\alpha}{n}}$.
Since $M$ is of weak-type $(1,1)$ then $M_{\alpha}$ is bounded from $L^1(\mathcal{H}^n)$ into $L^{\frac1{1-\alpha}}(\mathcal{H}^n)$. On the other hand, using Hölder's inequality it is easy to see that $M_{\alpha}$ is bounded from $L^{1/\alpha}(\mathcal{H}^n)$ into $ L^{\infty}(\mathcal{H}^n)$. Interpolating these estimates, we obtain that the operator $M_{\alpha}$ is bounded from $L^{p}(\mathcal{H}^n)$ into $L^{q}(\mathcal{H}^n)$ where $\frac1{q}=\frac1{p}-\frac{\alpha}{n}$ and $1<p<\frac{n}{\alpha}$.

At this point, we need to recall some estimates of the measure in the hyperbolic space. In details in the hyperbolic setting,  there is an exponential growth of the measure, that is 
\begin{equation}
\mu_n(B_{\mathcal{H}}(x,r))=\Omega_n \int_{0}^r (\sinh t)^{n-1}\,dt \simeq_n \frac{r^n}{1+r^n}e^{(n-1)r},
\end{equation}
where $\Omega_n$ is the euclidean $(n-1)$-volume of the sphere $S^{n-1}$ and the subindex in the symbol $\simeq$ means that the constant behind this symbol depends only on dimension $n$. Another important estimates for this work is the measure of the intersection of two balls, that is 
\begin{equation}\label{eq: interBolas}
\mu_n(B_{\mathcal{H}}(x,r)\cap B_{\mathcal{H}}(y,s))\lesssim_n e^{(n-1)\frac{r+s-d_{\mathcal{H}^n}(x,y)}{2}}.
\end{equation}
This estimate is well known in the literature, for instance for geometric estimate we refer to \cite{CN07}, \cite{Rlibro} and \cite[Section 2]{AO23}.

Observe that this exponential behavior, as well as the metric properties of $\mathcal{H}^n$, make the classical covering arguments fail. 
To avoid this problem, inspired in the papers \cite{CS74,S81,AO23} we split the fractional maximal operator in a local part and a part far of the origin. In details, we consider the fractional average operator $A_{r,\alpha}$, defined for $0<\alpha<n$ and $r>0$ by
$$A_{r,\alpha}f(x)=\frac1{\mu_n(B_{\mathcal{H}}(x,r))^{1-\frac{\alpha}{n}}}\int_{B_{\mathcal{H}}(x,r)} |f(y)|d\mu_n(y).$$

Now, we can split the fractional maximal operator in a local fractional maximal operator $M_{\alpha}^{loc}$, where the supremum is taken over all radius less or equal than 2, and a fractional maximal operator $M_{\alpha}^{far}$ where the supremum is over all radius greater that 2. That is, 
\begin{equation}
M_{\alpha}f(x)\leq \sup_{r\leq 2} A_{r,\alpha}f(x)+\sup_{r> 2} A_{r,\alpha}f(x) =: M_{\alpha}^{loc}f(x) +M_{\alpha}^{far}f(x).
\end{equation}

Note that the operator $M_{\alpha}^{loc}$ behaves as in the Euclidean setting. The main difficulties appear in the estimation of $M_{\alpha}^{far}$.

For the local part, as in  the Euclidean setting, we need a local fractional class of weights, that is $A_{p,q}^{loc}(\mathcal{H}^n)$. We said that a weight $w$ belong to the class $A_{p,q}^{loc}(\mathcal{H}^n)$, $1<p,q<\infty$, if

\begin{equation}\label{eq: Apqloc}
\sup_{0<r(B)\leq 2} \left(\frac1{\mu_n(B)}\int_B w^q \,d\mu_n \right) \left(\frac1{\mu_n(B)}\int_B w^{-p'} \,d\mu_n \right)^{\frac{q}{p'}}<\infty
\end{equation}
where the supremum is taken over all balls $B$ with radius $r(B)\leq 2$. 
Also, a weight $w$ belong to the class $A_{1,q}^{loc}(\mathcal{H}^n)$, $1<q<\infty$, if
\begin{equation}\label{eq: A1qloc}
\sup_{0<r(B)\leq 2} \left(\frac1{\mu_n(B)}\int_B w^q \,d\mu_n \right) \|w^{-1}\chi_B\|_{\infty}^q<\infty
\end{equation}
where the supremum is taken over all balls $B$ with radius $r(B)\leq 2$.

The statement of our main result is 
\begin{thm}\label{AcotM}
Let $0<\alpha<n$, $1\leq p<\frac{n}{\alpha}$ and $\frac1{q}=\frac1{p}-\frac{\alpha}{n}$ and $w$ be a weight. Suppose that
\begin{enumerate}[\label=(i)]
\item \label{hip 1}$w\in A_{p,q}^{loc}(\mathcal{H}^n)$.
\item \label{hip 2}There exists $0<\beta<1$ and $\beta \leq \gamma <p$ such that for every $r\geq 1$ we have 
\begin{equation}\label{eq: condition Ar}
\int_F A_{r,\alpha}(\chi_E)(y)w^q(y)d\mu_n(y)\lesssim e^{(n-1)r(1-\frac{\alpha}{n})(\beta-1)} w^p(E)^{\frac{\gamma}{p}}w^q(F)^{1-\frac{\gamma}{q}},
\end{equation}
for any pair of measurable subsets $E,F\subset \mathcal{H}^n$.
\end{enumerate} 
Then, 
\begin{equation}\label{eq: debil Malpha}
\|M_{\alpha}f\|_{L^{q,\infty}(w^q)} \lesssim \|f\|_{L^p(w^p)}.
\end{equation}
Furthermore, if $\beta<\gamma$ then for each fixed $\delta\geq 0$ we have
\begin{equation}\label{eq: Acot prom}
\sum_{j=1}^{\infty} j^{\delta}\|A_jf\|_{L^q(w^q)}\lesssim \|f\|_{L^p(w^p)}.
\end{equation}
In particular, if $\beta<\gamma$ then
\begin{equation}\label{eq: fuerte Malpha}
\|M_{\alpha}f\|_{L^q(w^q)}\lesssim \|f\|_{L^p(w^p)}.
\end{equation}
\end{thm}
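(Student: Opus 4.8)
The plan is to split the fractional maximal operator as in the introduction, $M_\alpha f \le M_\alpha^{\mathrm{loc}} f + M_\alpha^{\mathrm{far}} f$, and treat the two pieces separately; the local piece is governed by hypothesis \ref{hip 1} and the far piece by hypothesis \ref{hip 2}. For the local part $M_\alpha^{\mathrm{loc}}$, the strategy is to run the classical Euclidean argument: because $\mu_n$ is doubling on balls of radius at most $2$ (this follows from the measure estimate $\mu_n(B_{\mathcal H}(x,r))\simeq_n r^n(1+r^n)^{-1}e^{(n-1)r}$, which is comparable to $r^n$ for $r$ bounded and comparable to $e^{(n-1)r}$ for $r\ge 1$), a Besicovitch/Vitali-type covering lemma is available at that scale, and so the weighted strong $(p,q)$ bound for $M_\alpha^{\mathrm{loc}}$ follows from $w\in A_{p,q}^{\mathrm{loc}}(\mathcal H^n)$ by exactly the Muckenhoupt--Wheeden-type proof (for $p>1$ via Hölder's inequality on each ball and a good-$\lambda$ or level-set decomposition, and for $p=1$ via the $A_{1,q}^{\mathrm{loc}}$ condition). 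This yields in particular $\|M_\alpha^{\mathrm{loc}} f\|_{L^q(w^q)}\lesssim\|f\|_{L^p(w^p)}$, which is stronger than the weak estimate we need from this piece, so the local part causes no trouble for either conclusion.

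For the far part, the key observation is that $M_\alpha^{\mathrm{far}} f(x)=\sup_{r>2}A_{r,\alpha}f(x)$, and for $r>2$ the measure is comparable to the exponential $e^{(n-1)r}$, so $A_{r,\alpha}f(x)\simeq e^{-(n-1)r(1-\alpha/n)}\int_{B_{\mathcal H}(x,r)}|f|\,d\mu_n$ up to dimensional constants. Discretizing the radius, it suffices to control $\sup_{j\ge 1}A_j f$ (and, for \eqref{eq: Acot prom}, the weighted $\ell^\delta$-sum of the $A_j$), where $A_j:=A_{j,\alpha}$. The heart of the matter is then a single-scale estimate: hypothesis \eqref{eq: condition Ar}, tested on $E=\{|f|^p w^p > \lambda\}$-type sets or more directly via a duality/testing argument, should give the restricted strong-type bound $\|A_j(\chi_E)\|_{L^q(w^q)}\lesssim e^{(n-1)j(1-\alpha/n)(\beta-1)}w^p(E)^{1/p}$ with an extra power $w^p(E)^{(\gamma-1)/p}$-type gain that, combined with the exponential decay factor $e^{(n-1)j(1-\alpha/n)(\beta-1)}$ (recall $\beta<1$, so this is summable in $j$), lets us sum over $j$. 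To pass from this restricted estimate on characteristic functions to general $f$, I would use the standard layer-cake / dyadic decomposition $f=\sum_k 2^k\chi_{E_k}$ with $E_k\approx\{2^k<|f|\le 2^{k+1}\}$, apply the restricted bound to each $A_j(\chi_{E_k})$, and sum first in $k$ using the $\gamma<p$ exponent (which makes a super-additivity/convexity argument work, as in the Ghosh--Rela and Antezana--Ombrosi papers) and then in $j$ using the geometric decay. The role of the dichotomy $\beta=\gamma$ versus $\beta<\gamma$ is that when $\beta<\gamma$ there is a strictly positive power of $j$ to spare in the exponential, which is exactly what produces the convergent sum $\sum_j j^\delta\|A_j f\|_{L^q(w^q)}$ for every fixed $\delta\ge 0$; when $\beta=\gamma$ one only gets a bounded (non-summed) family, hence merely the weak-type conclusion \eqref{eq: debil Malpha} for the far part, which still suffices because $M_\alpha^{\mathrm{far}}f\le\sum_j$ is replaced by a supremum and one argues at the level of distribution functions.

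Assembling the two pieces: for \eqref{eq: debil Malpha}, the local strong bound trivially implies a weak bound, and the far part's weak bound comes either from the summed estimate (when $\beta<\gamma$) or from a direct weak-type argument using \eqref{eq: condition Ar} with $\gamma=\beta$ — here one estimates $w^q(\{M_\alpha^{\mathrm{far}}f>\lambda\})$ by linearizing the supremum (choosing for each $x$ a radius $r(x)>2$), using a Vitali-type selection among the chosen balls at each dyadic scale of radii, and invoking \eqref{eq: condition Ar} on the resulting disjointified sets. For \eqref{eq: Acot prom} and hence \eqref{eq: fuerte Malpha} when $\beta<\gamma$, one simply adds the $j=0$ term (the local part, or radii $\le 2$) to the convergent sum over $j\ge 1$ and bounds $M_\alpha^{\mathrm{far}}f\le\sum_{j\ge 1}A_j f$ pointwise after discretization. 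The main obstacle I anticipate is the passage from the restricted-type inequality \eqref{eq: condition Ar} to the strong-type bound on arbitrary functions at a fixed scale: one must exploit the exponent gap $\gamma<p$ carefully so that the sum over the layers $E_k$ converges with a constant independent of $j$, and only then fold in the $j$-sum; getting the two summations to cooperate — rather than losing the decay in $j$ while paying for the $k$-sum — is the delicate point, and it is precisely where the condition $\gamma<p$ (as opposed to $\gamma\le p$) and the strict inequality $\beta<\gamma$ for the strong-type conclusion enter in an essential way.
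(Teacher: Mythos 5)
Your global architecture --- splitting $M_\alpha\le M_\alpha^{loc}+M_\alpha^{far}$, disposing of the local part via $A_{p,q}^{loc}(\mathcal H^n)$, attacking the far part by applying \eqref{eq: condition Ar} to level sets and summing over integer scales, with the dichotomy $\beta=\gamma$ (weak type only) versus $\beta<\gamma$ (strong type) --- is the same as the paper's, and the local part is fine. But the far-part argument has genuine gaps. First, the single-scale bound you posit, $\|A_{j,\alpha}(\chi_E)\|_{L^q(w^q)}\lesssim e^{(n-1)j(1-\frac{\alpha}{n})(\beta-1)}w^p(E)^{1/p}$, does not follow from \eqref{eq: condition Ar} as stated: testing with $F=\{A_{j,\alpha}\chi_E>\lambda\}$ yields only the restricted weak-type estimate $w^q(F)\lesssim \lambda^{-q/\gamma}e^{(n-1)j(1-\frac{\alpha}{n})(\beta-1)\frac{q}{\gamma}}w^p(E)^{q/p}$, and integrating this in $\lambda$ requires $\gamma>1$ and, because $\|A_{j,\alpha}\chi_E\|_\infty$ can be as large as $e^{(n-1)j\frac{\alpha}{n}}$, produces a different exponential rate. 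More importantly, after passing to general $f$ the operator norm of $A_{j,\alpha}$ decays like $e^{(n-1)j(1-\frac{\alpha}{n})(\beta-\gamma)/\gamma}$ (this is what the paper's computation gives), not like $e^{(n-1)j(1-\frac{\alpha}{n})(\beta-1)}$; summability in $j$ therefore hinges on $\beta<\gamma$, and your assertion that ``$\beta<1$, so this is summable in $j$'' conflates the restricted estimate with the full one. Second, your fallback weak-type argument at $\beta=\gamma$ via ``linearizing the supremum and a Vitali-type selection at each dyadic scale'' is precisely the kind of covering argument the paper points out fails in $\mathcal H^n$: a dilate $B(x,5r)$ has measure $\sim e^{4(n-1)r}\mu_n(B(x,r))$, so any Vitali/Besicovitch selection at scale $r$ loses an exponential factor that the available decay $e^{(n-1)r(1-\frac{\alpha}{n})(\beta-1)}$ cannot absorb; moreover \eqref{eq: condition Ar} is a bilinear testing condition, not a per-ball condition, so disjointification of centers does not feed into it. The paper's actual weak-type argument is a plain union bound over integer scales $r$, made affordable by a distributional lemma.

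Third, the two ends of your layer decomposition are unaddressed, and that is where the paper's main technical work (Lemma \ref{Lemma}) lies. The paper first replaces $f$ by its unit-scale average $f_1=A_1f$ (so that $M_\alpha^{far}f\le M_\alpha^{far}(A_1f)$ and everything is ultimately controlled by $\|A_2f\|_{L^p(w^p)}\lesssim\|f\|_{L^p(w^p)}$ using $w^p\in A_p^{loc}$), and then uses the \emph{finite} decomposition $f_1\le 1+\sum_{k=0}^{r}e^{(n-1)k}\chi_{E_k}+f_1\chi_{\{f_1\ge e^{(n-1)r}\}}$. The head term contributes $A_{r,\alpha}(1)\simeq e^{(n-1)r\frac{\alpha}{n}}$, which is unbounded in $r$ since $\alpha>0$ and must be removed by working at heights $\lambda>2e^{(n-1)r\frac{\alpha}{n}}$ and invoking homogeneity; the tail term becomes a characteristic-function statement only because a point of $\{f_1\ge e^{(n-1)r}\}$ in $B(x,r)$ forces a whole unit ball inside $B(x,r+1)\cap\{f_2\ge c_0e^{(n-1)r}\}$ --- both steps use the pre-averaging essentially. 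The finitely many middle layers are then handled by an $\varepsilon$-weighted pigeonhole that distributes the threshold $\frac12$ among $k=0,\dots,r$. Your doubly infinite dyadic decomposition $f=\sum_k2^k\chi_{E_k}$ has no mechanism for the head or the tail and no quantitative device for making the $k$-sum and the $j$-sum cooperate; as you yourself flag, that interaction is the delicate point, and it is exactly the part that remains unproved in your proposal.
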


\begin{remark}
Observe that this theorem give us the boundedness of $M_{\alpha}$ from $L^p(\mathcal{H}^n,w^p)$ into $L^q(\mathcal{H}^n,w^q)$ for $1<p<\frac{n}{\alpha}$ and $\frac1{q}=\frac1{p}-\frac{\alpha}{n}$, this result is similar to the classical boundedness of $M_{\alpha}$ in the euclidean setting. There exist a weight that satisfies \eqref{hip 1} and \eqref{hip 2} but is not in $A_{p,q}(\mathcal{H}^n)$, see Example \ref{Ejemplos} (ii).
\end{remark}

Now, we centered to provide a more tractable condition that condition \eqref{eq: condition Ar}. For it, we study the behavior of the weight at each level and we need the following notation, 
\begin{align*}
 \mathcal{C}_1=B_\mathcal{H}(0,1)\setminus \{0\} && \text{ and } && \mathcal{C}_{j+1}=B_\mathcal{H}(0,j+1)\setminus B_{\mathcal{H}}(0,j) && \text{for each } j\in \N.
\end{align*}
In the following proposition we show an sufficient condition for the behavior of the weight at each level.

\begin{prop}\label{PropEx}
Let $0<\alpha<n$,  $1\leq p<\frac{n}{\alpha}$ and $\frac1{q}=\frac1{p}-\frac{\alpha}{n}$.
\label{Prop a)}If $w$ be a weight that satisfy the property that there exists $\delta<\min\left\{p, \frac{q}{p}+p-\frac{p}{1-\frac{\alpha}{n}}\right\}$, such that for all $j,l,r\in \N$ with $|l-j|\leq r$,
\begin{equation}\label{eq: Cond Cj}
w^q(\mathcal{C}_l\cap B_{\mathcal{H}}(x,r))\lesssim e^{(n-1)\frac{r+l-j}{2}(p-\delta)}e^{(n-1)r\delta}(w^p(x))^{\frac{q}{p}} \qquad \text{ for a.e. } x\in \mathcal{C}_j,
\end{equation} 
then  the condition \eqref{eq: condition Ar} holds for $\beta=\frac{p}{(1-\frac{\alpha}{n})(\frac{q}{p}+p-\delta)}$ and $\gamma=\frac{q}{\frac{q}{p}+p-\delta}$.

\end{prop}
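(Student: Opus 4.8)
The plan is to reduce everything, via Fubini and the homogeneity of $\mathcal H^n$, to a discrete estimate over the annuli $\mathcal C_l$, and then to combine the hypothesis \eqref{eq: Cond Cj} with the geometric ball--intersection bound \eqref{eq: interBolas} through a single well-chosen interpolation. Throughout set $s:=\tfrac qp+p-\delta$, so that $\gamma=q/s$ and $(1-\tfrac\alpha n)\beta=p/s$; the two constraints on $\delta$ are exactly $\gamma<p$ and $\beta<1$. Fix $r\ge1$ — replacing $r$ by $\lceil r\rceil$ affects the relevant exponentials only by absolute constants, so I may assume $r\in\N$ — and recall that $V(r):=\mu_n(B_{\mathcal H}(x,r))$ is independent of $x$ and satisfies $V(r)\simeq_n e^{(n-1)r}$ for $r\ge1$. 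Since $A_{r,\alpha}(\chi_E)(y)=V(r)^{-(1-\alpha/n)}\mu_n(E\cap B_{\mathcal H}(y,r))$, Fubini gives
\[
\int_F A_{r,\alpha}(\chi_E)\,w^q\,d\mu_n \;=\; V(r)^{-(1-\alpha/n)}\int_E w^q\bigl(F\cap B_{\mathcal H}(z,r)\bigr)\,d\mu_n(z)\;=\; V(r)^{-(1-\alpha/n)}\sum_{l,j}T_{l,j},
\]
with $E_l:=E\cap\mathcal C_l$, $F_j:=F\cap\mathcal C_j$ and $T_{l,j}:=\int_{E_l}w^q\bigl(F_j\cap B_{\mathcal H}(z,r)\bigr)\,d\mu_n(z)$, where $T_{l,j}=0$ unless $|l-j|\le r$. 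So it suffices to prove $\sum_{l,j}T_{l,j}\lesssim e^{(n-1)rp/s}\,w^p(E)^{\gamma/p}\,w^q(F)^{1-\gamma/q}$ (a harmless polynomial factor in $r$ will be produced).

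The next step is to bound each $T_{l,j}$ (with $|l-j|\le r$) in two ways. For the first, I would write $w^q(F_j\cap B(z,r))\le w^q(\mathcal C_j\cap B(z,r))^{p/q}\,w^q(F_j)^{1-p/q}$, apply \eqref{eq: Cond Cj} with centre $z\in\mathcal C_l$ to get $w^q(\mathcal C_j\cap B(z,r))\lesssim e^{(n-1)\left[\frac{(r+j-l)(p-\delta)}{2}+r\delta\right]}w^q(z)$, and then use $w^q(z)^{p/q}=w^p(z)$ and $\int_{E_l}w^p\,d\mu_n=w^p(E_l)$; this yields
\[
T_{l,j}\;\lesssim\;e^{(n-1)\frac pq\left[\frac{(r+j-l)(p-\delta)}{2}+r\delta\right]}\,w^q(F_j)^{1-p/q}\,w^p(E_l).
\]
For the second, Fubini again gives $T_{l,j}=\int_{F_j}\mu_n(E_l\cap B(u,r))\,w^q(u)\,d\mu_n(u)$, and since for $u\in\mathcal C_j$ one has $\mu_n(E_l\cap B(u,r))\le\mu_n\bigl(B_{\mathcal H}(0,l)\cap B_{\mathcal H}(u,r)\bigr)\lesssim e^{(n-1)\frac{r+l-j}{2}}$ by \eqref{eq: interBolas}, this gives the purely geometric bound $T_{l,j}\lesssim e^{(n-1)\frac{r+l-j}{2}}\,w^q(F_j)$.

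I would then take the geometric mean of these two estimates, with weight $1-\sigma$ on the first and $\sigma$ on the second, and set $\sigma:=1-\gamma/p$ (this lies in $(0,1)$ precisely because $\gamma<p$). With this choice the $w^q(F_j)$-exponent becomes $1-\gamma/q$, the $w^p(E_l)$-exponent becomes $\gamma/p$, and — the crucial point — the coefficient of $(l-j)$ in the combined exponent vanishes identically while the remaining part collapses to $(n-1)rp/s$; this amounts to the identities $\sigma-(1-\sigma)\tfrac{p(p-\delta)}{q}=0$ and $(1-\sigma)\tfrac{p(p+\delta)}{q}+\sigma=\tfrac{2p}{s}$, which is exactly where both hypotheses on $\delta$ enter. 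Thus $T_{l,j}\lesssim e^{(n-1)rp/s}\,w^q(F_j)^{1-\gamma/q}\,w^p(E_l)^{\gamma/p}$ for $|l-j|\le r$.

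Finally I would sum $\sum_{l,j:\,|l-j|\le r}w^q(F_j)^{1-\gamma/q}w^p(E_l)^{\gamma/p}$ by treating it as a bilinear form with the band kernel $\chi_{\{|l-j|\le r\}}$ and applying Hölder together with Young's convolution inequality on $\Z$, with exponents $u_1=p/\gamma$, $u_3=q/(q-\gamma)$, $u_2=(1-\gamma\alpha/n)^{-1}$; these satisfy $\tfrac1{u_1}+\tfrac1{u_2}+\tfrac1{u_3}=2$ and are all $\ge1$ (because $\gamma<p<q$ and $\gamma\alpha/n=\tfrac{q-p}{ps}<1$, using $\delta<p<p+1$), and since $\tfrac p\gamma\cdot\tfrac\gamma p=1$ and $\tfrac q{q-\gamma}\bigl(1-\tfrac\gamma q\bigr)=1$ the sum is $\lesssim(2r+1)^{1-\gamma\alpha/n}\,w^p(E)^{\gamma/p}\,w^q(F)^{1-\gamma/q}$. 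Combining with $V(r)^{-(1-\alpha/n)}\simeq e^{-(n-1)r(1-\alpha/n)}$ gives
\[
\int_F A_{r,\alpha}(\chi_E)\,w^q\,d\mu_n\;\lesssim\;(2r+1)^{1-\gamma\alpha/n}\,e^{(n-1)r\left(\frac ps-(1-\alpha/n)\right)}\,w^p(E)^{\gamma/p}\,w^q(F)^{1-\gamma/q};
\]
since $\beta<1$ forces $\tfrac ps-(1-\tfrac\alpha n)=(1-\tfrac\alpha n)(\beta-1)<0$, the polynomial factor is absorbed into the (decaying) exponential and \eqref{eq: condition Ar} follows. The main obstacle I expect is the bookkeeping in the geometric-mean step — checking that the single parameter $\sigma=1-\gamma/p$ simultaneously produces the exponents $\gamma/p$ and $1-\gamma/q$, annihilates the $(l-j)$-dependence of the exponent, and leaves the clean constant $e^{(n-1)rp/s}$; everything downstream (the band summation and the collection of constants) is then routine.
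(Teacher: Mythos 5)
Your overall skeleton is the same as the paper's: decompose $E$ and $F$ along the annuli $\mathcal C_j$, and bound each block in two ways --- once through the hypothesis \eqref{eq: Cond Cj} (your H\"older between the sets $F_j\cap B\subset\mathcal C_j\cap B$ and $F_j\cap B\subset F_j$ is a slightly slicker route to the same block estimate than the paper's H\"older--Minkowski combination) and once through the purely geometric bound \eqref{eq: interBolas}. Your exponent bookkeeping in the geometric-mean step is also correct: with $\sigma=1-\gamma/p=(p-\delta)/s$ the coefficient of $l-j$ does vanish and the $r$-dependence does collapse to $e^{(n-1)rp/s}$.

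The genuine gap is in the final summation. By replacing the \emph{minimum} of the two block bounds with a fixed-weight geometric mean, you produce a bound that is completely flat in $l-j$ across the band $|l-j|\le r$, and you have discarded the exponential off-diagonal decay that the minimum retains. Consequently Young's inequality unavoidably costs you the factor $(2r+1)^{1-\gamma\alpha/n}$ (this loss is sharp for the flat bound: take $a_l=b_l=1$ for $1\le l\le r$). You cannot then conclude \eqref{eq: condition Ar} for the stated $\beta=\frac{p}{(1-\alpha/n)s}$; absorbing the polynomial into the decaying exponential only yields \eqref{eq: condition Ar} for each $\beta'\in(\beta,1)$. This is harmless when $p>1$, since then $\beta<\gamma$ and one may pick $\beta'<\gamma$, so Theorem \ref{AcotM} still applies. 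But when $p=1$ one has $\beta=\gamma$ exactly, and any enlargement $\beta'>\beta$ violates the hypothesis $\beta\le\gamma$ of Theorem \ref{AcotM}; your argument therefore does not establish the proposition (nor the resulting weak-type estimate) at $p=1$, which the statement explicitly includes. The fix is what the paper does: keep $\min\{X_{l,j},Y_{l,j}\}$, split the double sum at a threshold $l<j+t$ versus $j\le l-t$, sum the resulting geometric series (this is where the off-diagonal decay is used and no power of $r$ appears), and then optimize over $t$; the optimal $t$ reproduces exactly your exponents $\gamma/p$, $1-\gamma/q$ and $e^{(n-1)rp/s}$ but with no polynomial loss.
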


Observe that the Proposition above and $A_{p,q}^{loc}$ give us conditions for the weak type and strong type estimates, since $\beta\leq \gamma$, and the equal holds if and only if $p=1$. On the other hand, it is possible to study condition in which only holds the weak type inequality. In details, if $w$ be a weight that satisfy the property that there exists $\delta<1$, such that for all $j,l,r\in \N$ with $|l-j|\leq r$,
\begin{equation}\label{eq: Cond Cj2}
w^q(\mathcal{C}_l\cap B_{\mathcal{H}}(x,r))\lesssim e^{(n-1)\frac{r+l-j}{2}(q(1-\frac{\alpha}{n})-\delta)}e^{(n-1)r\delta}(w^p(x))^{\frac{q}{p}} \qquad \text{ for a.e. } x\in \mathcal{C}_j,
\end{equation} 
then the condition \eqref{eq: condition Ar} holds for $\beta=\gamma=\frac{q}{q+1-\delta}$.
In particular, there exist weights that satisfies the condition \eqref{eq: Cond Cj2} but not the \eqref{eq: Cond Cj}. That give examples for weighted weak type of $M_{\alpha}$ in which not holds the strong type, see Example \ref{Ejemplos} (i).

The following Corollary give us conditions to generate examples for the boundedness of the fractional maximal operator.
\begin{corol}\label{Corollary}
Let $0<\alpha<n$, $1\leq p<\frac{n}{\alpha}$, $\frac1{q}=\frac1{p}-\frac{\alpha}{n}$ and $w\in A_{p,q}^{loc}(\mathcal{H}^n)$  be a weight such that there exists $\delta<1$, such that for all $j,l,r\in \N$ with $|l-j|\leq r$,
\begin{equation*}
w^{q}(\mathcal{C}_l\cap B_{\mathcal{H}}(x,r))\lesssim e^{(n-1)\frac{r+l-j}{2}(q(1-\frac{\alpha}{n}) -\delta)}e^{(n-1)r\delta}(w^{p}(x))^{\frac{q}{p}} \qquad \text{ for a.e. } x\in \mathcal{C}_j.
\end{equation*} 
Then, 
\begin{equation}
\|M_{\alpha}f\|_{L^{q,\infty}(w^{q})}\lesssim \|f\|_{L^{p}(w^{p})}.
\end{equation}
Furthermore, if there exists 
 $\xi <\min\left\{p, \frac{q}{p}+p-\frac{p}{1-\frac{\alpha}{n}}\right\} $, such that for all $j,l,r\in \N$ with $|l-j|\leq r$,
\begin{equation*}
w^{q}(\mathcal{C}_l\cap B_{\mathcal{H}}(x,r))\lesssim e^{(n-1)\frac{r+l-j}{2}(p -\xi)}e^{(n-1)r\xi}(w^{p}(x))^{\frac{q}{p}} \qquad \text{ for a.e. } x\in \mathcal{C}_j,
\end{equation*} 
we have
\begin{equation}\label{Corolario fuerte}
\|M_{\alpha}f\|_{L^{q}(w^{q})}\lesssim \|f\|_{L^p(w^{p})}.
\end{equation}
\end{corol}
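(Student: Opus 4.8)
The plan is to obtain the corollary directly from Theorem~\ref{AcotM}, using Proposition~\ref{PropEx} (together with the variant recorded in the paragraph right after it) only to verify hypothesis~\ref{hip 2}; hypothesis~\ref{hip 1} is handed to us verbatim. No new estimate is needed, so the work is purely a matter of matching parameters and checking the side conditions demanded by the theorem.

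For the weak-type conclusion I would argue as follows. Since $w\in A^{loc}_{p,q}(\mathcal{H}^n)$, hypothesis~\ref{hip 1} of Theorem~\ref{AcotM} holds. The level bound assumed on $w$ is exactly condition~\eqref{eq: Cond Cj2} with the given $\delta<1$, and the remark following Proposition~\ref{PropEx} tells us this forces~\eqref{eq: condition Ar} with $\beta=\gamma=\frac{q}{q+1-\delta}$, which is hypothesis~\ref{hip 2}. Before invoking the theorem I would check its side constraints $0<\beta<1$ and $\beta\le\gamma<p$: from $\delta<1$ we get $q+1-\delta>q>0$, hence $0<\beta=\gamma<1$, and $\gamma<p$ is equivalent to $q(1-p)<p(1-\delta)$, which holds because the left side is $\le 0$ while the right side is positive. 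Theorem~\ref{AcotM} then yields~\eqref{eq: debil Malpha}, i.e.\ the weak bound.

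For the strong-type conclusion I would instead feed the second level bound into Proposition~\ref{PropEx} itself: that bound is precisely~\eqref{eq: Cond Cj} with the role of the parameter there played by $\xi<\min\{p,\frac{q}{p}+p-\frac{p}{1-\alpha/n}\}$, so the proposition delivers~\eqref{eq: condition Ar} with $\beta=\frac{p}{(1-\alpha/n)(q/p+p-\xi)}$ and $\gamma=\frac{q}{q/p+p-\xi}$. Here the bound $\xi<p$ gives both $\gamma<p$ and $\beta>0$, the bound $\xi<\frac{q}{p}+p-\frac{p}{1-\alpha/n}$ gives $\beta<1$, and by the observation after Proposition~\ref{PropEx} one has $\beta\le\gamma$ with equality precisely when $p=1$. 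Thus for $p>1$ the strict inequality $\beta<\gamma$ holds and Theorem~\ref{AcotM} upgrades the estimate, via~\eqref{eq: Acot prom}, to the strong bound~\eqref{Corolario fuerte}; when $p=1$ conditions~\eqref{eq: Cond Cj} and~\eqref{eq: Cond Cj2} coincide, so the conclusion there is simply the weak bound already obtained.

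Since every step is a citation, there is no serious obstacle; the one thing that genuinely has to be verified by hand is that the admissible ranges of $\delta$ and of $\xi$ in the hypotheses really do land the parameters $\beta,\gamma$ inside the region $0<\beta\le\gamma<p$ required by Theorem~\ref{AcotM}, with strictness $\beta<\gamma$ for the strong estimate --- and it is here that the borderline case $p=1$ must be isolated.
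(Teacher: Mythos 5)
Your proposal is correct and follows exactly the route the paper intends: the corollary is obtained by feeding the two level conditions into the remark after Proposition~\ref{PropEx} (giving $\beta=\gamma=\frac{q}{q+1-\delta}$ for the weak bound) and into Proposition~\ref{PropEx} itself (giving $\beta<\gamma$ when $p>1$ for the strong bound), then invoking Theorem~\ref{AcotM}; the paper supplies no separate proof because this is all there is to check. Your explicit verification that $0<\beta\le\gamma<p$ in each case, and your isolation of the degenerate case $p=1$ where $\beta=\gamma$ and only the weak estimate survives, are exactly the points that need care.
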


The remainder of the paper is organized as follows. Section 2 is devoted to the proofs of the main results. Finally, in section 3 we show examples of weights that satisfies the hypothesis of Corollary \ref{Corollary} and examples in which the fractional maximal operator is bounded but the weight  does not belong in the classical fractional class of weights.

\section{Proof of main results}

\subsection{Proof of Theorem \ref{AcotM}}

For the proof of Theorem \ref{AcotM} we need the following Lemma

\begin{lemma}\label{Lemma}
Let $0<\alpha<n,$ $1\leq p,q<\infty$ and $w$ be a weight. Assume that there exist $0<\beta<1$ and $\beta\leq\gamma<p$ such that 
\begin{equation}\label{eq: cond Lemma}
\int_{F}A_{r,\alpha} (\chi_E)w^q\, d\mu_n 
\lesssim e^{(n-1)r(\beta-1)(1-\frac{\alpha}{n})}w^p(E)^{\frac{\gamma}{p}}w^q(F)^{1-\frac{\gamma}{q}},
 \end{equation}
for each $E$ and $F$ measurable subset of $\mathcal{H}^n$.

Then, for each $r\in \N$ and $\lambda >0$ we have
\begin{align*}
 w^q(\{A_{r,\alpha}(A_1f) >\lambda\})
 \lesssim &\sum_{k=0}^ {r} e^{(n-1)k\frac{q}{\gamma}} \left(\frac{e^{(n-1)r}}{e^{(n-1)k}}\right)^{\frac{q}{\gamma}\varepsilon} 
     e^{(n-1)r\frac{q}{\gamma}(\beta-1-\beta\frac{\alpha}{n})}
\\&\qquad \qquad \times w^p\left(\left\{A_{2}(f)e^{(n-1)r\frac{\alpha}{n}}\geq \eta e^{(n-1)(k-1)}\lambda\right\}\right)^\frac{q}{p}  
\end{align*}
      for all $\varepsilon\in (0,1).$
\end{lemma}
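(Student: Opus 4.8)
\medskip

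\noindent\emph{Plan of proof.} The strategy is to split $g:=A_1f\ge0$ by size at the $\sim r$ scales that are genuinely relevant for an average over a ball of radius $r$, to trade $A_1f$ for $A_2f$ by the elementary comparison of unit balls, to estimate each dyadic layer with the testing hypothesis \eqref{eq: cond Lemma}, and finally to sum the layers.

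\smallskip

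I would start from two geometric facts: $V(s):=\mu_n(B_{\mathcal{H}}(z,s))$ does not depend on the centre, $V(s)\simeq_n e^{(n-1)s}$ for $s\ge1$, and $V(1)\simeq_n V(2)\simeq_n1$; hence $A_1f\le C_nA_2f$ pointwise (with $C_n:=V(2)/V(1)$) and $A_{r,\alpha}(\chi_F)\le V(r)^{\alpha/n}\simeq_n e^{(n-1)r\alpha/n}$ for every measurable $F$ and every $r\in\N$. Fix $r\in\N$, $\lambda>0$, $\varepsilon\in(0,1)$, and let $\eta=\eta(n,\alpha,\varepsilon)>0$ be small (fixed below). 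Put $s_k:=e^{(n-1)k}s_0$ for $0\le k\le r$, where $s_0:=\eta\lambda\,V(r)^{-\alpha/n}$, so that $A_{r,\alpha}(s_0)\equiv s_0V(r)^{\alpha/n}=\eta\lambda$. Writing
\[
g\ \le\ s_0+\sum_{k=1}^{r}s_k\,\chi_{D_k}+g\,\chi_{\{g>s_r\}},\qquad D_k:=\{g>s_{k-1}\},\quad E_k:=\{A_2f>s_{k-1}/C_n\}\ \supseteq\ D_k ,
\]
and applying the positive linear operator $A_{r,\alpha}$, the constant term contributes at most $\eta\lambda$; so, setting aside the ``large--value'' term, on the rest of $\{A_{r,\alpha}g>\lambda\}$ one has
\[
\{A_{r,\alpha}g>\lambda\}\ \subseteq\ \bigcup_{k=1}^{r}G_k,\qquad G_k:=\{x:\ s_k\,A_{r,\alpha}(\chi_{D_k})(x)>a_k\lambda\},
\]
with the weights $a_k:=\eta\,e^{-(n-1)(r-k)\varepsilon}$, chosen so that $\sum_{k=1}^{r}a_k\le\eta/(1-e^{-(n-1)\varepsilon})\le\tfrac14$ after fixing $\eta$ small in terms of $\varepsilon,n$. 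This is where the ``$\varepsilon$ of room'' enters: it keeps $\sum_k a_k$ bounded independently of $r$, at the price of the factor $\bigl(e^{(n-1)r}/e^{(n-1)k}\bigr)^{\varepsilon q/\gamma}$ appearing later in the bound.

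\smallskip

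For each $k$ I would then use Chebyshev in $w^q\,d\mu_n$, monotonicity of $A_{r,\alpha}$ (to pass from $D_k$ to $E_k$), and hypothesis \eqref{eq: cond Lemma} with $E=E_k$, $F=G_k$:
\[
w^q(G_k)\ \le\ \frac{s_k}{a_k\lambda}\int_{G_k}A_{r,\alpha}(\chi_{E_k})\,w^q\,d\mu_n\ \lesssim\ \frac{s_k}{a_k\lambda}\,e^{(n-1)r(\beta-1)(1-\frac\alpha n)}\,w^p(E_k)^{\gamma/p}\,w^q(G_k)^{1-\gamma/q}.
\]
After the standard preliminary reduction to $f$ bounded with compact support (which makes these quantities finite and the following manipulation licit) one solves for $w^q(G_k)$ and gets
\[
w^q(G_k)\ \lesssim\ \Bigl(\tfrac{s_k}{a_k\lambda}\Bigr)^{q/\gamma}\,e^{(n-1)r(\beta-1)(1-\frac\alpha n)\frac q\gamma}\,w^p(E_k)^{q/p}.
\]
Substituting $s_k=\eta\,e^{(n-1)(k-1)}\lambda V(r)^{-\alpha/n}$ and $a_k=\eta\,e^{-(n-1)(r-k)\varepsilon}$ gives $\bigl(s_k/(a_k\lambda)\bigr)^{q/\gamma}\simeq_n e^{(n-1)k\frac q\gamma}(e^{(n-1)(r-k)})^{\frac q\gamma\varepsilon}V(r)^{-\frac\alpha n\frac q\gamma}$; the identity $(\beta-1)(1-\tfrac\alpha n)-\tfrac\alpha n=\beta-1-\beta\tfrac\alpha n$ then merges the leftover $V(r)^{-\alpha/n}\simeq_n e^{-(n-1)r\alpha/n}$ with the hypothesis's exponential into $e^{(n-1)r\frac q\gamma(\beta-1-\beta\frac\alpha n)}$; and since $E_k=\{A_2f\,e^{(n-1)r\alpha/n}>c_n\,\eta\,e^{(n-1)(k-1)}\lambda\}$ for a dimensional constant $c_n$, a shift of the index $k$ by a dimensional amount (absorbed in $\lesssim$) identifies $E_k$ with a subset of the level set $\{A_2f\,e^{(n-1)r\alpha/n}\ge\eta e^{(n-1)(k-1)}\lambda\}$ of the statement. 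Hence $w^q(G_k)$ is dominated by the $k$-th summand in the statement, and $w^q(\{A_{r,\alpha}g>\lambda\})\le\sum_{k=1}^{r}w^q(G_k)$ closes the estimate (the index $k=0$ being present only for notational uniformity).

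\smallskip

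\noindent The one delicate point is the ``large--value'' term $g\,\chi_{\{g>s_r\}}$. Continuing the geometric decomposition past $s_r$ yields a divergent series: the coefficients $(s_k/(a_k\lambda))^{q/\gamma}$ grow with $k$ while $w^p(\{g>s_{k-1}\})$ need not decay, and the only universal bound $A_{r,\alpha}(\chi_{\{\cdot\}})\le V(r)^{\alpha/n}$ provides no room, so this tail has to be treated apart. I expect it to be handled by observing that a contribution of this term exceeding $\lambda/4$ forces $A_{r,\alpha}g(x)$ to be so large compared with $\lambda$ that the whole nested tail $\{g>s_r\}\supseteq\{g>s_{r+1}\}\supseteq\cdots$ collapses onto a single level set comparable to $D_r$; concretely, one truncates $f$ at a finite height, runs the argument with constants independent of the truncation, and passes to the limit by monotone convergence. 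This is the only step that uses the exponential volume growth of $\mu_n$ beyond bookkeeping, and it is the core of the lemma.

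\medskip
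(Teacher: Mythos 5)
Your layer-by-layer argument is essentially the paper's: the splitting of $A_1f$ into a small constant term, geometrically weighted level sets, and a large-value tail; the pigeonhole with the $\varepsilon$-geometric coefficients $a_k$ (which is exactly where the factor $\bigl(e^{(n-1)r}/e^{(n-1)k}\bigr)^{\varepsilon q/\gamma}$ enters in the paper as well); Chebyshev followed by the testing hypothesis and solving $w^q(G_k)^{\gamma/q}\lesssim\cdots$ for $w^q(G_k)$; and the exponent bookkeeping that merges $V(r)^{-\alpha/n}$ with $e^{(n-1)r(\beta-1)(1-\frac{\alpha}{n})}$ into $e^{(n-1)r\frac{q}{\gamma}(\beta-1-\beta\frac{\alpha}{n})}$. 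The only difference in that portion is cosmetic: you use nested super-level sets where the paper uses disjoint layers, and both end up measuring super-level sets of $A_2f$ anyway.

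The genuine gap is the tail term $g\,\chi_{\{g>s_r\}}$, which you correctly single out as the core of the lemma but do not prove. Neither of your two suggestions closes it. Truncating $f$ at a finite height changes nothing: for a fixed bounded, compactly supported $f$ the set $\{A_1f>s_r\}$ is nonempty as soon as $\lambda$ is small (the estimate must hold for every $\lambda>0$ with constants independent of the truncation), so there is no limiting argument to run. And the heuristic that the tail ``collapses onto a single level set comparable to $D_r$'' names the correct target --- the $k=r$ summand --- but not the mechanism that reaches it; Chebyshev cannot be applied directly because the hypothesis \eqref{eq: cond Lemma} only tests indicators, while $g$ is unbounded on $\{g>s_r\}$. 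The paper's mechanism is geometric: the set where $A_{r,\alpha}(g\,\chi_{\{g>s_r\}})\neq 0$ lies in the $r$-neighbourhood of $\{A_1f>s_r\}$; if $A_1f(y)>s_r$ then $A_2f(z)\geq c_0\,s_r$ for every $z\in B_{\mathcal{H}}(y,1)$ (since $B_{\mathcal{H}}(y,1)\subset B_{\mathcal{H}}(z,2)$), so the entire unit ball $B_{\mathcal{H}}(y,1)$ sits inside $E:=\{A_2f\geq c_0 s_r\}$; consequently every $x$ in the neighbourhood satisfies $A_{r+1,\alpha}(\chi_E)(x)\geq \mu_n(B_{\mathcal{H}}(y,1))/\mu_n(B_{\mathcal{H}}(x,r+1))^{1-\frac{\alpha}{n}}\gtrsim e^{-(n-1)r(1-\frac{\alpha}{n})}$, a lower bound uniform over the whole neighbourhood. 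One further application of \eqref{eq: cond Lemma} at radius $r+1$, with this $E$ and with $F$ the corresponding super-level set of $A_{r+1,\alpha}(\chi_E)$, gives $w^q(F)\lesssim e^{(n-1)r\beta(1-\frac{\alpha}{n})\frac{q}{\gamma}}\,w^p(E)^{q/p}$, which is then absorbed into the $k=r$ term of your sum. Without this step the stated distributional inequality is not established.
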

    
\begin{proof}
We can assume without lost of generality that $f\geq 0$ and we denote $f_1=A_1f$ and $f_2=A_2f$. 
We can split the function $f_1$ as follows
$$
f_{1}\leq 1 +\sum_{k=0}^{r} e^{(n-1)k}\chi_{E_k} +f_{1}\chi_{\{f_{1} \geq e^{(n-1)r}\}},
$$
where $E_{k}=\left\{ e^{(n-1)(k-1)}\leq f_1< e^{(n-1)k}\right\}$. Now, applying the operator $A_{r,\alpha}$ we obtain
$$
A_{r,\alpha}f_{1}\leq e^{(n-1)r\frac{\alpha}{n}}+\sum_{k=0}^{r} e^{(n-1)k} A_{r,\alpha}\left(\chi_{E_k}\right) +A_{r,\alpha}\left(f_{1}\chi_{\{f_{1} \geq e^{(n-1)r}\}}\right).
$$
Let $\Omega=\left\{ A_{r,\alpha} f_1 > 2e^{(n-1)r\frac{\alpha}{n}}\right\}$. Then, for any $x\in E$ we have
$$
\sum_{k=0}^{r} e^{(n-1)k} A_{r,\alpha}\left(\chi_{E_k}\right) +A_{r,\alpha}\left(f_{1}\chi_{\{f_{1} \geq e^{(n-1)r\frac{\alpha}{n}}\}}\right)>e^{(n-1)r\frac{\alpha}{n}}
$$

Consider the following sets
$$
I:=\left\{\sum_{k=0}^{r} e^{(n-1)k} A_{r,\alpha}\left(\dfrac{1}{e^{(n-1)r\frac{\alpha}{n}}}\chi_{E_k}\right)>\dfrac{1}{2}\right\}
$$
and 
$$
II:=\left\{A_{r,\alpha}\left(\dfrac{1}{e^{(n-1)r}}f_{1}\chi_{\{f_{1} \geq e^{(n-1)r\frac{\alpha}{n}}\}}\right)\neq 0\right\},
$$
so $w^q(\Omega)\leq w^q(I)+w^q(II).$

For the estimate of $w^q(I)$, we set $\varepsilon\in(0,1)$.  Observe that if 
$$
\sum_{k=0}^{r} e^{(n-1)k} A_{r,\alpha}\left(\dfrac{1}{e^{(n-1)r\frac{\alpha}{n}}}\chi_{E_k}\right)>\frac{1}{2},
$$
then we necessarily have for some $k_0\in\mathbb{N}$ such that $k_0\leq r$,
$$
A_{r,\alpha}\left(\dfrac{1}{e^{(n-1)r\frac{\alpha}{n}}}\chi_{E_{k_0}}\right) >  \dfrac{e^{(n-1)\varepsilon}-1}{e^{(n-1)(k_0+2)}}\left(\dfrac{e^{(n-1)k_0}}{e^{(n-1)r}}\right)^{\varepsilon}.
$$

Indeed, in other case 
\begin{align*}
\frac{1}{2}&
< \sum_{k=0}^{r} e^{(n-1)k} A_{r,\alpha}\left(\dfrac{1}{e^{(n-1)r\frac{\alpha}{n}}}\chi_{E_k}\right)
\\& \leq\dfrac{e^{(n-1)\varepsilon}-1}{e^{(n-1)\left(r\varepsilon+2\right)}}\dfrac{e^{(n-1)(r+1)\varepsilon}-1}{e^{(n-1)\varepsilon}-1}
<\dfrac{1}{e^{(n-1)}}
<\frac{1}{2},
\end{align*}
which leads a contradiction. Thus 
$$
w^q(I)\leq \sum_{k=0}^{r}
w^q(F_k),
$$
where
$$
F_k=\left\{A_{r,\alpha}\left(\chi_{E_{k}}\right)\geq \dfrac{e^{(n-1)\varepsilon}-1}{e^{(n-1)(k+2)}}e^{(n-1)r\frac{\alpha}{n}}\left(\dfrac{e^{(n-1)k}}{e^{(n-1)r}}\right)^{\varepsilon}\right\}.
$$

Observe that for each $k$,  $F_k$ has finite measure and 
$$
w^q(F_k) \dfrac{e^{(n-1)\varepsilon}-1}{e^{(n-1)(k+2)}}e^{(n-1)r\frac{\alpha}{n}}\left(\dfrac{e^{(n-1)k}}{e^{(n-1)r}}\right)^{\varepsilon}
\leq \int_{F_k} A_{r,\alpha}(\chi_{E_k}) w^q\, d\mu_n .
$$
Using the condition \eqref{eq: cond Lemma}, we obtain 
\begin{align*}
w^q(F_k)&
\lesssim \left[\dfrac{e^{(n-1)(k+2)}}{e^{(n-1)\varepsilon}-1} \dfrac{1}{e^{(n-1)r\frac{\alpha}{n}}} \left(\dfrac{e^{(n-1)r}}{e^{(n-1)k}}\right)^\varepsilon e^{(n-1)r(\beta-1)(1-\frac{\alpha}{n})}\right]^\frac{q}{\gamma} w^p(E_k)^\frac{q}{p}
\\ & \lesssim e^{(n-1)k\frac{q}{\gamma}}\left(\dfrac{e^{(n-1)r}}{e^{(n-1)k}}\right)^{\frac{q}{\gamma}\varepsilon}e^{(n-1)r\frac{q}{\gamma}(\beta-1-\beta\frac{\alpha}{n})}w^p(E_k)^\frac{q}{p}.    
\end{align*}

Then, 
\begin{equation}\label{eq: I}
w^q(I)\lesssim 
\sum_{k=0}^{r} e^{(n-1)k\frac{q}{\gamma}}\left(\dfrac{e^{(n-1)r}}{e^{(n-1)k}}\right)^{\frac{q}{\gamma}\varepsilon}e^{(n-1)r\frac{q}{\gamma}(\beta-1-\beta\frac{\alpha}{n})}
w^p\left(\left\{f_1\geq e^{(n-1)(k-1)}\right\}\right)^{\frac{q}{p}}.
\end{equation}

Now, we estimate $w^{q}(II)$. First, observe that 

\begin{align*}
w^q(II)
&=w^q\left(\left\{x\in\mathcal{H}^n : A_{r,\alpha}\left(f_{1}\chi_{\{f_{1} \geq e^{(n-1)r}\}}\right)(x)\neq 0\right\}\right)
\\&\leq w^q\left(\left\{x\in\mathcal{H}^n: B(x,r)\cap \left\{f_{1}\geq e^{(n-1)r}\right\}\neq \emptyset\right\}\right).
\end{align*}

Let $x\in\mathcal{H}^n$ such that $B_\mathcal{H}(x,r)\cap \left\{f_{1}\geq e^{(n-1)r}\right\}\neq \emptyset$, and $y\in B_\mathcal{H}(x,r)\cap \left\{f_{1}\geq e^{(n-1)r}\right\}$. Then, we have 
$$
B_\mathcal{H}(y,1)\subset B_\mathcal{H}(x,r+1)\cap\{f_2\geq c_0 e^{(n-1)r}\},
$$
where $c_0=\frac{\mu_n\left(B_\mathcal{H}(0,1)\right)}{\mu_n\left(B_\mathcal{H}(0,2)\right)}$.

Then, 
\begin{align*}
 A_{r+1,\alpha}\left(\chi_{\left\{f_2\geq c_0e^{(n-1)r}\right\}}\right)(x)&\geq \dfrac{\mu_n(B_\mathcal{H}(y,1))}{\mu_n(B_\mathcal{H}(x,r+1))^{(1-\frac{\alpha}{n})}}
= \dfrac{e^{(n-1)\frac{\alpha}{n}}}{c_1e^{(n-1)r(1-\frac{\alpha}{n})}}.
\end{align*}

Now, considering the sets
\begin{align*}
F=\left\{ A_{r+1,\alpha}\left(\chi_{\left\{f_2\geq c_0e^{(n-1)r}\right\}}\right)(x)\geq \dfrac{e^{(n-1)\frac{\alpha}{n}}}{c_1e^{(n-1)r(1-\frac{\alpha}{n})}}\right\},
&&
E=\left\{f_2\geq c_0e^{(n-1)r}\right\}
\end{align*}
and using the condition \eqref{eq: cond Lemma} we obtain
\begin{align*}
w^q\left(F\right) 
&\leq c_1\frac{e^{(n-1)r(1-\frac{\alpha}{n})}}{e^{(n-1)\frac{\alpha}{n}}}\int_{F} A_{r+1,\alpha}(\chi_E) w^q\, d\mu_n
\\
&\lesssim c_1\frac{e^{(n-1)r(1-\frac{\alpha}{n})}}{e^{(n-1)\frac{\alpha}{n}}}e^{(n-1)r(\beta-1)(1-\frac{\alpha}{n})}w^p(E)^{\frac{\gamma}{p}}w^q(F)^{1-\frac{\gamma}{q}},
\end{align*}
and this imply  
\begin{equation}\label{eq: II}
w^q(II)
\leq w^q(F) 
\lesssim e^{(n-1)r\beta(1-\frac{\alpha}{n})\frac{q}{\gamma}} w^p\left(\left\{ f_2\geq c_0 e^{(n-1)r}\right\}\right)^\frac{q}{p}.
\end{equation}

Taking account \eqref{eq: I} and \eqref{eq: II} we obtain 
\begin{align*}
&w^q\left(\left\{A_{r,\alpha} f_{1} >2e^{(n-1)r}\right\}\right)
\leq w^q(I)+w^q(II)
\\&\lesssim\sum_{k=0}^{r} e^{(n-1)k\frac{q}{\gamma}}\left(\dfrac{e^{(n-1)r}}{e^{(n-1)k}}\right)^{\frac{q}{\gamma}\varepsilon}e^{(n-1)r\frac{q}{\gamma}(\beta-1-\beta\frac{\alpha}{n})}
w^p\left(\left\{f_1\geq e^{(n-1)(k-1)}\right\}\right)^{\frac{q}{p}}
\\&\qquad+ e^{(n-1)r\beta(1-\frac{\alpha}{n})\frac{q}{\gamma}} 
w^p\left(\left\{ f_2\geq c_0 e^{(n-1)r}\right\}\right)^\frac{q}{p}.
\end{align*}

To complete the proof, note that the second term of the estimate above is controlled by the last term in the sum. Using homogeneity, we obtain  
\begin{align*}
w^q(\{A_{r,\alpha} f_1 >\lambda\})
&\lesssim 
\sum_{k=0}^ {r} e^{(n-1)k\frac{q}{\gamma}} \left(\frac{e^{(n-1)r}}{e^{(n-1)k}}\right)^{\frac{q}{\gamma}\varepsilon} 
     e^{(n-1)r\frac{q}{\gamma}(\beta-1-\beta\frac{\alpha}{n})}
\\&\qquad \qquad \times w^p\left(\left\{A_{2}(f)e^{(n-1)r\frac{\alpha}{n}}\geq \eta e^{(n-1)(k-1)}\lambda\right\}\right)^\frac{q}{p} .
\end{align*}
 \end{proof}

The distributional estimate obtained in the previous lemma will allow us to prove the Theorem \ref{AcotM}.
 \begin{proof}[Proof of Theorem \ref{AcotM}]
 As we discussed in the introduction, $M_\alpha^{loc}$ behaves as in the Euclidean case, so by the condition \textit{(\ref{hip 1})}, that is, $w\in A_{p,q}^{loc}(\mathcal{H}^n)$, we have that for $p\geq 1$
 $$
 \left\Vert M_\alpha^{loc}f\right\Vert_{L^{q,\infty}(w^q)}\lesssim \Vert f\Vert_{L^p(w^p)},
 $$
and for $p>1$
$$\left\Vert M_\alpha^{loc}f\right\Vert_{L^{q}(w^q)}\lesssim \Vert f\Vert_{L^p(w^p)}.$$
Therefore, we focus on the estimation of $M_\alpha^{far}$. Let start with  the extreme case  $\beta=\gamma$. It follows from Lemma \ref{Lemma} that
 
 \begin{align*}
 &w^q\left(\left\{M_\alpha^{far}f>\lambda\right\}\right)
 \leq  w^q\left(\left\{M_\alpha^{far}(A_1f)>\lambda\right\}\right)
 \\&\leq \sum_{r=1}^\infty w^q \left(\left\{A_{r,\alpha}(A_1f)>\lambda\right\}\right)
 \\&\lesssim \sum_{r=0}^\infty \sum_{k=0}^ {r} e^{(n-1)k\frac{q}{\gamma}} \left(\frac{e^{(n-1)r}}{e^{(n-1)k}}\right)^{\frac{q}{\gamma}\varepsilon} 
     e^{(n-1)r\frac{q}{\gamma}(\beta-1-\beta\frac{\alpha}{n})}
 w^p\left(\left\{A_{2}(f)e^{(n-1)r\frac{\alpha}{n}}\geq \eta e^{(n-1)(k-1)}\lambda\right\}\right)^\frac{q}{p}  
 \\& =\sum_{r=0}^\infty \sum_{k=0}^ {r} e^{(n-1)k\frac{q}{\beta}} \left(\frac{e^{(n-1)r}}{e^{(n-1)k}}\right)^{\frac{q}{\beta}\varepsilon} 
     e^{(n-1)r\frac{q}{\beta}(\beta-1-\beta\frac{\alpha}{n})}\left(\int_{\mathcal{H}^n}\chi_{\left\{A_2(f)\geq \eta e^{(n-1)(k-1-r\frac{\alpha}{n})}
     \lambda\right\}}w^p\, d\mu_n\right)^\frac{q}{p}.
 \end{align*}
 
By making the change of variable $l=k-\lceil r\frac{\alpha}{n}\rceil$, where $\lceil z \rceil$ is the ceiling of the number $z$, the last estimate is controlled  by

 \begin{align*}
     &\sum_{r=0}^\infty \sum_{l=-\lceil r\frac{\alpha}{n}\rceil}^ {r-\lceil r\frac{\alpha}{n}\rceil} e^{(n-1)(l+\lceil r\frac{\alpha}{n}\rceil)\frac{q}{\beta}} \left(\frac{e^{(n-1)r}}{e^{(n-1)(l+\lceil r\frac{\alpha}{n}\rceil)}}\right)^{\frac{q}{\beta}\varepsilon} 
     e^{(n-1)r\frac{q}{\beta}(\beta-1-\beta\frac{\alpha}{n})}\left(\int_{\mathcal{H}^n}\chi_{\left\{A_2(f)\geq \eta e^{(n-1)(l-2)}
     \lambda\right\}}w^p\, d\mu_n\right)^\frac{q}{p}
     \\&\lesssim \left(\int_{\mathcal{H}^n} \sum_{r=0}^\infty \sum_{l=-\lceil r\frac{\alpha}{n}\rceil}^ {-1} e^{(n-1)l\frac{p}{\beta}(1-\varepsilon)} 
     e^{(n-1)r\frac{p}{\beta}(\beta-\beta\frac{\alpha}{n}+\varepsilon-\varepsilon\frac{\alpha}{n}+\frac{\alpha}{n}-1)}\chi_{\left\{A_2(f)\geq \eta e^{(n-1)(l-2)}
     \lambda\right\}}w^p\, d\mu_n\right)^\frac{q}{p} 
     \\&+ \left(\int_{\mathcal{H}^n} \sum_{r=0}^\infty \sum_{l=0}^{r-\lceil r\frac{\alpha}{n}\rceil} e^{(n-1)l\frac{p}{\beta}(1-\varepsilon)} 
     e^{(n-1)r\frac{p}{\beta}(\beta-\beta\frac{\alpha}{n}+\varepsilon-\varepsilon\frac{\alpha}{n}+\frac{\alpha}{n}-1)}\chi_{\left\{A_2(f)\geq \eta e^{(n-1)(l-2)}
     \lambda\right\}}w^p\, d\mu_n\right)^\frac{q}{p} 
     \\&=I+II.
 \end{align*}

If we take $0<\varepsilon<1-\beta$, we can estimate $I$ and $II$ as follows
\begin{align*}
&I\lesssim\left(\int_{\mathcal{H}^n}\sum_{r=0}^\infty \sum_{l=-\infty}^ {-1} e^{(n-1)l\frac{p}{\beta}(1-\varepsilon-\beta)}e^{(n-1)r\frac{p}{\beta}(\beta-\beta\frac{\alpha}{n}+\varepsilon-\varepsilon\frac{\alpha}{n}-1+\frac{\alpha}{n})}\left(\dfrac{A_2(f)}{\eta\lambda}\right)^{p}w^p\, d\mu_n\right)^\frac{q}{p} 
     \\&\quad\lesssim \dfrac{1}{\lambda^q}\left(\int_{\mathcal{H}^n}\sum_{r=0}^\infty e^{-(n-1)r\frac{p}{\beta}(1-\beta-\varepsilon)(1-\frac{\alpha}{n})} A_2(f)^pw^p\, d\mu_n\right)^\frac{q}{p} 
     \\&\quad \lesssim \dfrac{1}{\lambda^q}\Vert A_2(f)\Vert_{L^p(w^p)}^q,
\end{align*}

and 

\begin{align*}
&II\lesssim\left(\int_{\mathcal{H}^n}\sum_{r=0}^\infty \sum_{l=0}^ {r-\lceil r\frac{\alpha}{n}\rceil}e^{(n-1)l\frac{p}{\beta}(1-\varepsilon-\beta)} 
     e^{(n-1)r\frac{p}{\beta}(\beta-\beta\frac{\alpha}{n}+\varepsilon-\varepsilon\frac{\alpha}{n}+\frac{\alpha}{n}-1)}e^{(n-1)lp}\chi_{\left\{A_2(f)\geq \eta e^{(n-1)(l-2)}
     \lambda\right\}}w^p\, d\mu_n\right)^\frac{q}{p}    \\&\quad\lesssim\left(\int_{\mathcal{H}^n}\sum_{l=0}^\infty \sum_{r=\lceil\frac{n}{n-\alpha}l\rceil}^ \infty \left(\dfrac{e^{(n-1)l}}{e^{(n-1)r(1-\frac{\alpha}{n})}}\right)^{1-\beta-\varepsilon} 
    e^{(n-1)lp}\chi_{\left\{A_2(f)\geq \eta e^{(n-1)(l-2)}
     \lambda\right\}}w^p\, d\mu_n\right)^\frac{q}{p}    \\&\quad\lesssim\left(\int_{\mathcal{H}^n}\sum_{l=0}^\infty 
    e^{(n-1)lp}\chi_{\left\{A_2(f)\geq \eta e^{(n-1)(l-2)}
     \lambda\right\}}w^p\, d\mu_n\right)^\frac{q}{p}
     \\&\quad\lesssim \left(\int_{\mathcal{H}^n}\left(\dfrac{A_2(f)}{\lambda}\right)^pw^p\, d\mu_n\right)^\frac{q}{p}
     \\&\quad = \dfrac{1}{\lambda^q}\Vert A_2(f)\Vert_{L^p(w^p)}^q.
\end{align*}

Thus, combining the two previous estimates, it follows that
$$
 w^q\left(\left\{M_\alpha^{far}(f)>\lambda\right\}\right)\lesssim \dfrac{1}{\lambda^q}\Vert A_2(f)\Vert _{L^p(w^p)}^q\lesssim  \dfrac{1}{\lambda^q}\Vert f\Vert _{L^p(w^p)}^q,
$$
where in the last inequality, we use that $w\in A_{p,q}^{loc}(\mathcal{H}^n)$ implies $w^p\in A_p^{loc}(\mathcal{H}^n)$.\\

Now, suppose that $\beta<\gamma$. Then, by the layer-cake formula and Lemma \ref{Lemma}, it follows that

 \begin{align*}
    &\Vert A_{r,\alpha} f\Vert_{L^q(w^q)}^q = q\int_0^\infty \lambda^{q-1}w^q(A_{r,\alpha}(f)> \lambda)\, d\lambda
    \\&\quad \lesssim\int_0^\infty \lambda^{q-1} \sum_{k=0}^ {r} e^{(n-1)k\frac{q}{\gamma}} \left(\frac{e^{(n-1)r}}{e^{(n-1)k}}\right)^{\frac{q}{\gamma}\varepsilon} 
     e^{(n-1)r\frac{q}{\gamma}(\beta-1-\beta\frac{\alpha}{n})}
 w^p\left(\left\{A_{2}(f)e^{(n-1)r\frac{\alpha}{n}}\geq \eta e^{(n-1)(k-1)}\lambda\right\}\right)^\frac{q}{p} \, d\lambda
    \\&\quad\lesssim e^{(n-1)r\frac{q}{\gamma}(\beta-1-\beta\frac{\alpha}{n}+\varepsilon)}\sum_{k=0}^ {r} e^{(n-1)k\frac{q}{\gamma}(1-\varepsilon)}
      \int_0^\infty \lambda ^{q-1}\left(\int_{\mathcal{H}^n}\chi_{\left\{A_2(f) e^{(n-1)r\frac{\alpha}{n}}\geq \eta e^{(n-1)(k-1)}\lambda\right\}}w^p\, d\mu_n\right)^\frac{q}{p} \, d\lambda
    \\&\quad\lesssim e^{(n-1)r\frac{q}{\gamma}(\beta-1-\beta\frac{\alpha}{n}+\varepsilon)}\sum_{k=0}^ {r} e^{(n-1)k\frac{q}{\gamma}(1-\varepsilon)}
        \left[\int_{\mathcal{H}^n} \left(\int_0^\infty\chi_{\left\{A_2(f) e^{(n-1)r\frac{\alpha}{n}}\geq \eta e^{(n-1)(k-1)}\lambda\right\}}\lambda^{q-1} d\lambda\right)^\frac{p}{q}w^p\, d\mu_n\right]^\frac{q}{p}                                                    \\&\quad\lesssim e^{(n-1)r\frac{q}{\gamma}(\beta-1-\beta\frac{\alpha}{n}+\varepsilon)}\sum_{k=0}^ {r} e^{(n-1)k\frac{q}{\gamma}(1-\varepsilon)}\left[\int_{\mathcal{H}^n} \left(\dfrac{A_2(f) e^{(n-1)r\frac{\alpha}{n}}}{\eta e^{(n-1)(k-1)}}\right)^p w^p\, d\mu_n\right]^\frac{q}{p}                                     \\&\quad\lesssim e^{(n-1)r\frac{q}{\gamma}(\beta-1-\beta\frac{\alpha}{n}+\varepsilon+\gamma\frac{\alpha}{n})}\sum_{k=0}^ {r} e^{(n-1)k\frac{q}{\gamma}(1-\varepsilon-\gamma)}\left[\int_{\mathcal{H}^n} A_2(f)^p w^p\, d\mu_n\right]^\frac{q}{p}  \\&\quad\lesssim e^{(n-1)r\frac{q}{\gamma}(\beta-1-\beta\frac{\alpha}{n}+\varepsilon+\gamma\frac{\alpha}{n})}e^{(n-1)r\frac{q}{\gamma}(1-\varepsilon-\gamma)}\Vert A_2(f)\Vert_{L^p(w^p)}^q 
        \\&\quad =e^{(n-1)r\frac{q}{\gamma}(\beta-\gamma)(1-\frac{\alpha}{n})}\Vert A_2(f)\Vert_{L^p(w^p)}^q 
 \end{align*}
 Since $w^p\in A_p^{loc}(\mathcal{H}^n)$ and $\beta-\gamma<0$, we get
 
 $$
\sum_{r=1}^{\infty} r^{\delta}\|A_{r,\alpha}(f)\|_{L^q(w^q)}\lesssim \sum_{r=1}^{\infty} r^{\delta}e^{(n-1)r\frac{1}{\gamma}(\beta-\gamma)(1-\frac{\alpha}{n})}\Vert A_2(f)\Vert_{L^p(w^p)}\lesssim \|f\|_{L^p(w^p)}.
$$
Hence \eqref{eq: Acot prom} holds. From \eqref{eq: Acot prom} and the fact that
$$
M_\alpha^{far}f\lesssim \sum_{r=1}^\infty A_{r,\alpha}(f),  
$$
 we obtain \eqref{eq: fuerte Malpha} and the proof of the Theorem \ref{AcotM} is complete.
\end{proof}
\subsection{Proof of Proposition \ref{PropEx}}
 
In this section, we provide a proof of Proposition \ref{PropEx} using the approach outlined in \cite{AO23}.
\begin{proof}
Let $E,F\subset \mathcal{H}^n$ be measurable sets. We denote
$$
 I=\int_{F}A_{r,\alpha}(\chi_{E})(x)w^q(x)\,d\mu_n(x)
$$
and split
 $I$ as follows
$$
  I_{l,j}=\int_{F_l}A_{r,\alpha}(\chi_{E_j})(x)w^q(x)\,d\mu_n(x),
$$
where $F_l=F\cap\mathcal{C}_l$ and $E_j=E\cap\mathcal{C}_j$, so that 
$I\lesssim\sum_{l,j\in\mathbb{N}}I_{l,j}.
$

Now, by the H\"older's and Minkowski's inequalities, and using the condition \eqref{eq: Cond Cj}, we have that
\begin{align}\label{eq: ineq 1}
I_{l,j}
&\simeq\frac1{e^{(n-1)r(1-\frac{\alpha}{n})}} \int_{F_l}\left(\int_{B_\mathcal{H}(y,r)}\chi_{E_j}(x)\,d\mu_n(x) \right) w^q(y)\,d\mu_n(y)
\nonumber
\\&\leq \frac1{e^{(n-1)r(1-\frac{\alpha}{n})}} w^q(F_l)^{1-\frac{p}{q}} \left[\int_{F_l}\left(\int_{E_j}\chi_{B_\mathcal{H}(y,r)}(x)\,d\mu_n(x) \right)^{\frac{q}{p}} w^q(y)\,d\mu_n(y)\right]^{\frac{p}{q}}
\nonumber
\\&\leq \frac1{e^{(n-1)r(1-\frac{\alpha}{n})}} w^q(F_l)^{1-\frac{p}{q}} \int_{E_j}\left(\int_{F_l}\chi_{B_\mathcal{H}(x,r)}(y)w^q(y)\,d\mu_n(y) \right)^{\frac{p}{q}} \,d\mu_n(x)
\nonumber
\\&\lesssim \frac1{e^{(n-1)r(1-\frac{\alpha}{n})}} w^q(F_l)^{1-\frac{p}{q}} \int_{E_j} \left(e^{(n-1)\frac{r+l-j}{2}(p-\delta)}e^{(n-1)r\delta} w^q(x)\right)^{\frac{p}{q}}\,d\mu_n(x)
\nonumber
\\&\leq \frac1{e^{(n-1)r(1-\frac{\alpha}{n})}}e^{(n-1)\frac{r+l-j}{2}(p-\delta)\frac{p}{q}}e^{(n-1)r\delta\frac{p}{q}} w^q(F)^{1-\frac{p}{q}}w^p(E_j).
\end{align}

On the other hand, using  the estimation \eqref{eq: interBolas}, we obtain that
 for any $y\in \mathcal{C}_l$ 
$$\mu_n(\mathcal{C}_j\cap B_\mathcal{H}(y,r))\lesssim e^{(n-1)\frac{r+j-l}{2}},$$
 and therefore
\begin{align}\label{eq: ineq 2}
I_{l,j}
&\simeq\frac1{e^{(n-1)r(1-\frac{\alpha}{n})}} \int_{F_l}\left(\int_{B_\mathcal{H}(y,r)}\chi_{E_j}(x)\,d\mu(x) \right) w^q(y)\,d\mu_n(y)
\nonumber
\\&\lesssim \frac1{e^{(n-1)r(1-\frac{\alpha}{n})}} \int_{F_l}e^{(n-1)\frac{r+j-l}{2}} w^q(y)\,d\mu_n(y)
\nonumber
\\&\leq \frac1{e^{(n-1)r(1-\frac{\alpha}{n})}} e^{(n-1)\frac{r+j-l}{2}} w^q(F_l).
\end{align}

Taking account the estimates \eqref{eq: ineq 1} and \eqref{eq: ineq 2},  we obtain
\begin{equation}\label{eq: min}
I
\lesssim \frac1{e^{(n-1)r(1-\frac{\alpha}{n})}}\sum_{l,j\in \N} 
\min\left\{e^{(n-1)\frac{r+l-j}{2}(p-\delta)\frac{p}{q}}e^{(n-1)r\delta\frac{p}{q}} w^q(F)^{1-\frac{p}{q}}w^p(E_j),
 e^{(n-1)\frac{r+j-l}{2}} w^q(F_l)
\right\}.
\end{equation}

Now, we consider the following sequences $\{A_j\}_{j\in \Z}$ and $\{B_l\}_{l\in \Z}$ defined as: 
\begin{align*}
A_j=\begin{cases}
w^p(E_j) &\text{ if } j>0
\\
0 &\text{ othewise}
\end{cases},
&&
B_l=\begin{cases}
w^q(F_l) &\text{ if } l>0
\\
0 &\text{  othewise }
\end{cases}.
\end{align*}
Also, set $A=w^p(E)$ and $B=w^q(F)$.
Let $s$ be a real parameter to be choosen later, and argue as follows 
\begin{align}\label{eq: s}
\sum_{l,j\in \N}&\min\left\{e^{(n-1)\frac{r+l-j}{2}(p-\delta)\frac{p}{q}}e^{(n-1)r\delta\frac{p}{q}} B^{1-\frac{p}{q}}A_j,
 e^{(n-1)\frac{r+j-l}{2}} B_l
\right\}\nonumber
\\&\leq \sum_{j\in \N} \sum_{l<j+s} e^{(n-1)\frac{r+l-j}{2}(p-\delta)\frac{p}{q}}e^{(n-1)r\delta\frac{p}{q}} B^{1-\frac{p}{q}}A_j \nonumber
+ \sum_{l\in \N} \sum_{j\leq l-s}e^{(n-1)\frac{r+j-l}{2}} B_l\nonumber
\\&\leq e^{(n-1)\frac{r}{2}\frac{p}{q}(p-\delta+2\delta)}B^{1-\frac{p}{q}}  \sum_{j\in \N}  \sum_{l<j+s} e^{(n-1)\frac{l-j}{2}\frac{p}{q}(p-\delta)}A_j \nonumber
+ e^{(n-1)\frac{r}{2}}\sum_{l\in \N} \sum_{j\leq l-s}e^{(n-1)\frac{j-l}{2}} B_l\nonumber
\\&\lesssim e^{(n-1)\frac{r}{2}\frac{p}{q}(p+\delta)}B^{1-\frac{p}{q}} \sum_{j\in \N}  e^{(n-1)\frac{s}{2}\frac{p}{q}(p-\delta)}A_j \nonumber
 + e^{(n-1)\frac{r}{2}}\sum_{l\in \N}e^{-(n-1)\frac{s}{2}} B_l\nonumber
\\& =  e^{(n-1)\frac{r}{2}\frac{p}{q}(p+\delta)} e^{(n-1)\frac{s}{2}\frac{p}{q}(p-\delta)} B^{1-\frac{p}{q}}  A
+e^{(n-1)\frac{r}{2}}e^{-(n-1)\frac{s}{2}} B.
\end{align}

In order to optimize the last expression, 
 let  $\varphi_{A,B}$ be the function defined by
$$\varphi_{A,B}(s)=
e^{(n-1)\frac{r}{2}\frac{p}{q}(p+\delta)} e^{(n-1)\frac{s}{2}\frac{p}{q}(p-\delta)} B^{1-\frac{p}{q}}  A
+e^{(n-1)\frac{r}{2}}e^{-(n-1)\frac{s}{2}} B.
$$

This function reaches its  absolute minimum at
$$
s_0=\log\left( \left[\frac{q}{(p-\delta)p}\frac{B^{\frac{p}{q}}}{A}\right]^{\frac{2}{n-1}\frac1{(p-\delta)\frac{p}{q}+1}}  \right) 
-r\frac{(p+\delta)\frac{p}{q}-1}{(p-\delta)\frac{p}{q}+1},
$$
and

$$
\varphi_{A,B}(s_0)=C_{p,q,\delta} e^{(n-1)\frac{r}{2}} 
 e^{\frac{(n-1)}{2}r\frac{(p+\delta)\frac{p}{q}-1}{(p-\delta)\frac{p}{q}+1}} A^{\frac1{(p-\delta)\frac{p}{q}+1}}B^{1-\frac{p}{q[(p-\delta)\frac{p}{q}+1]}}.
$$
Then, 
 we obtain that
\begin{align}\label{eq: phi}
\sum_{l,j\in \N}\min&\left\{e^{(n-1)\frac{r+l-j}{2}(p-\delta)\frac{p}{q}}e^{(n-1)r\delta\frac{p}{q}} w^q(F)^{1-\frac{p}{q}}w^p(E_j),
 e^{(n-1)\frac{r+j-l}{2}} w^q(F_l)
\right\}
\lesssim
\phi_{A,B}(s).
\end{align}
Hence,  \eqref{eq: min} and \eqref{eq: phi} implies that  
\begin{align*}
I
&\lesssim \frac{1}{e^{(n-1)r(1-\frac{\alpha}{n})}}\varphi_{A,B}(s_0)
\\&\lesssim  \frac{1}{e^{(n-1)r(1-\frac{\alpha}{n})}} e^{(n-1)\frac{r}{2}}
e^{\frac{(n-1)}{2}r\frac{(p+\delta)\frac{p}{q}-1}{(p-\delta)\frac{p}{q}+1}} w^p(E)^{\frac1{(p-\delta)\frac{p}{q}+1}}w^q(F)^{1-\frac{p}{q[(p-\delta)\frac{p}{q}+1]}}
\\&= \dfrac{1}{e^{(n-1)r(1-\frac{\alpha}{n})}}e^{(n-1)r\frac{p}{p-\delta+\frac{q}{p}}}w^p(E)^\frac{1}{(p-\delta)\frac{p}{q}+1}w^q(F)^{1-\frac{1}{p-\delta+\frac{q}{p}}}.
\end{align*}
 Then, the condition \eqref{eq: condition Ar} holds for $\beta=\frac{p}{(1-\frac{\alpha}{n})(\frac{q}{p}+p-\delta)}$ and $\gamma=\frac{q}{\frac{q}{p}+p-\delta}$, this complete the proof. 
\end{proof}
\section{Examples}

In this section we show examples of weights that satisfies our conditions.\\

Let $1\leq p <\frac{n}{\alpha}$ and $\frac1{q}=\frac1{p}-\frac{\alpha}{n}$. Consider $\theta\in \mathbb{R}$, we define the weight
$$w_{\theta}(x)=\frac1{\left[1+\mu
_n(B_\mathcal{H}(0,d_{\mathcal{H}^n}(0,x)))\right]^{\frac{\theta}{q}}}.$$

It is easy to see that $w_{\theta}\in A_{p,q}^{loc}(\mathcal{H}^n)$.
 
Observe, if $1-p\leq \theta<\min\left\{p, p+\frac{q}{p}-\frac{p}{1-\frac{\alpha}{n}}\right\} $, then $w_{\theta}$ satisfies the inequality \eqref{eq: Cond Cj} that is, 
$$w^q(\mathcal{C}_l\cap B_\mathcal{H}(x,r))\lesssim e^{(n-1)\frac{r+l-j}{2}(p-\delta)}e^{(n-1)r\delta}(w^p(x))^{\frac{q}{p}} \qquad \text{ for a.e. } x\in \mathcal{C}_j$$
with $\delta= \theta$. In particular, we can consider $\theta=\delta=1-p$.

On the other hand, if $-\frac{q}{p'}\leq \theta<1$, $w_{\theta}$ satisfies the estimate \eqref{eq: Cond Cj2}, 
 that is
$$w^q(\mathcal{C}_l\cap B_\mathcal{H}(x,r))\lesssim e^{(n-1)\frac{r+l-j}{2}(q(1-\frac{\alpha}{n})-\delta)}e^{(n-1)r\delta}(w^p(x))^{\frac{q}{p}} \qquad \text{ for a.e. } x\in \mathcal{C}_j$$
with $\delta=\theta$. In particular, if $\theta=-\frac{q}{p'}$ then $w_{\theta}(x)=\left[1+\mu_n(B_\mathcal{H}(0,d_{\mathcal{H}^n}(0,x)))\right]^{\frac1{p'}}$.\\

Now, we present examples for illustrate several point previously mentioned. Also, we show the difference between the classical $A_{p,q}$ class and our conditions for the weights.
\begin{ejem} 
\

\begin{enumerate}[(i)]\label{Ejemplos}
\item  Let $1< p <\frac{n}{\alpha}$ and $\frac1{q}=\frac1{p}-\frac{\alpha}{n}$. If we consider $\theta=-\frac{q}{p'}$ then we have that the weight $w_{\theta}$ satisfies the hypothesis of Corollary \ref{Corollary}, so
$$\|M_{\alpha}f\|_{L^{q,\infty}(w_{\theta}^q)}\lesssim \|f\|_{L^p(w_{\theta}^p)}$$
but 
$M_{\alpha}$ is {\bf not} bounded from $L^p(w_{\theta}^p)$ into $L^{q}(w_{\theta}^q)$. This can be seen by considering the function $f=\chi_{B_\mathcal{H}(0,1)}$ and using that $M_{\alpha}f(x)\gtrsim w_{\theta}^{q+p'}(x)$.

\item  Let $1< p <\frac{n}{\alpha}$ and $\frac1{q}=\frac1{p}-\frac{\alpha}{n}$. If $\frac1{2}< \theta  <\min\left\{p, p+\frac{q}{p}-\frac{p}{1-\frac{\alpha}{n}}\right\}$ then 
$$\|M_{\alpha}f\|_{L^{q}(w_{\theta}^q)}\lesssim \|f\|_{L^p(w_{\theta}^p)}$$
and the weight $w_{\theta}\not\in A_{p,q}(\mathcal{H}^n)$, since 
$$
\sup_{r(B)>0}  \left(\frac1{\mu_n(B)}\int_B w_{\theta}^q \,d\mu_n \right) \left(\frac1{\mu_n(B)}\int_B w_{\theta}^{-p'} \,d\mu_n \right)^{\frac{q}{p'}}=\infty.
$$
Hence, the class of weight that the fractional maximal operator, $M_{\alpha}$, is bounded from $L^p(w^p)$ into $L^q(w^q)$ is not included in the classical $A_{p,q}(\mathcal{H}^n)$.

\end{enumerate}
\end{ejem}


\end{document}